\newtheorem{thm}{Theorem}
\newtheorem{lem}[thm]{Lemma}
\newtheorem{cor}[thm]{Corollary}
\newtheorem{rem}[thm]{Remark}
\newtheorem{clm}[thm]{Claim}
\newcommand\cA{{\mathcal A}}
\newcommand\cF{{\mathcal F}}
\newcommand\cG{{\mathcal G}}
\newcommand{\ignore}[1]{}
\title{Rounds in a combinatorial search problem}
\begin{document}

\author{
D\'aniel Gerbner\thanks{Research supported by the J\'anos Bolyai Research Fellowship of the Hungarian Academy of Sciences.} 
\and
M\'at\'e Vizer\thanks{Research supported by the National Research, Development and Innovation
Office -- NKFIH under the grant SNN 116095.}}

\date{MTA R\'enyi Institute \\
Hungary H-1053, Budapest, Re\'altanoda utca 13-15.\\
\small \texttt{gerbner@renyi.hu, vizermate@gmail.com}\\
\today}

\maketitle

\begin{abstract}
We consider the following combinatorial search problem: we are given some excellent elements of $[n]$ and we should find at least one, asking questions of the following type: "Is there an excellent element in $A \subset [n]$?". G.O.H. Katona \cite{K2011} proved sharp results for the number of questions needed to ask in the adaptive, non-adaptive and two-round versions of this problem.

We verify a conjecture of Katona by proving that in the $r$-round version we need to ask $rn^{1/r}+O(1)$ queries for fixed $r$ and this is sharp.

We also prove bounds for the queries needed to ask if we want to find at least $d$ excellent elements.

\end{abstract}

\section{Introduction}

In the most basic model of combinatorial search theory Questioner needs to find a special element $x$ of $\{1,2,...,n\}(=:[n])$ by asking minimal number of questions of type "does $x \in F \subset [n]?$". Special elements are usually called defective; in this paper, following \cite{K2011} we call them \textit{excellent}. There are many generalizations of this very basic model, one can find many directions and results in the following survey papers and books: \cite{AW1987,A1988,C2013,DH1999,K1973}.

We call the \textit{complexity} of a specific combinatorial search problem the number of the questions needed to ask by Questioner in the worst case during an optimal strategy. 

For every combinatorial search problem there are at least two main approaches: whether it is \textit{adaptive} or \textit{non-adaptive}. In the adaptive scenario Questioner asks questions depending on the answers for the previously asked questions, however in the non-adaptive version Questioner needs to pose all the questions at the beginning.

A possible intermediate scenario is when there are $r$ rounds for some integer $r \ge 1$ fixed at the beginning and Questioner can pose questions in the $i^{th}$ round ($1 \le i \le r$) depending on the answers for the questions posed in the first $i-1$ rounds. Note that the non-adaptive version is the one-round version, and in the adaptive version there are infinitely many rounds (however it is easy to see that at most $n$ (or some function of $n$) rounds are enough for most of the combinatorial search problems). There are results in the literature, when authors provide a solution for an adaptive search problem that also solves the $r$-round version of that problem for some $r$. However we could only find few examples (see e.g. \cite{W2013}) where the focus of the research is how the complexity changes depending on the number of rounds. Our results fit into this line of research.

The paper is organized as follows: in Subsections 1.1 and 1.2 we state our results and in Section 2 we prove them. Finally we make some remarks and pose some questions.

\subsection{The model}

A question of R. Chambers was answered by G.O.H. Katona \cite{K2011}, who determined a sharp (up to constant terms) result for the complexity of the adaptive, non-adaptive and 2-round versions of the following model.

\vspace{2mm}

$\bullet$ \textbf{Input:} $[n]$ with some (possibly zero) excellent elements.

\vspace{1mm} 

$\bullet$ \textbf{Question:} is there an excellent in $A \subset [n]$?

\vspace{1mm}

$\bullet$ \textbf{Goal:} find an excellent element or state that there is none.

\vspace{2mm}

We denote the $r$-round version of this problem by $P(n,?,1,r)$ and denote by $|P(n,?,1,r)|$ its complexity. We also consider that variant of the previous model (and denote by $P(n,?,d,r)$), when Questioner should find (at least) $d$ excellent elements (or state that there are at most $d-1$), and also use the notation $|P(n,?,d,r)|$ for the complexity of the latter problem.

\subsection{Results}


In the following theorem we verify a conjecture of Katona (\cite{K2011}, Conjecture 1) by determining the complexity of $P(n,?,1,r)$ almost exactly.

\begin{thm}\label{main1}

For any $r,n \ge 1$ we have: $$  rn^{1/r} \ge |P(n,?,1,r)| \ge rn^{1/r}-2r+1.$$

\end{thm}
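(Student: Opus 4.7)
The plan is to prove the two inequalities separately, by induction on $r$. In both directions I work with an auxiliary ``promise'' variant of the problem, in which Questioner is told at the outset that at least one excellent element exists; denote its complexity by $Q(n,r)$. Because the last block of any partition is determined by elimination, the promise version saves exactly one query per level, so it is natural to prove the two bounds
$$|P(n,?,1,r)|\le rn^{1/r}\qquad\text{and}\qquad Q(n,r)\le rn^{1/r}-1$$
simultaneously.

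\emph{Upper bound.} I would establish these two bounds by a simultaneous induction on $r$. The base case $r=1$ is immediate: the $n$ singleton queries solve the unpromised problem, and in the promise version one of them can be skipped (the absent element is excellent by elimination). For the inductive step, partition $[n]$ into $k=\lceil n^{1/r}\rceil$ blocks of size at most $\lceil n/k\rceil$. In Round~1 ask whether each block contains an excellent element, omitting the last query in the promise variant; if all answers are ``no'' the problem is solved immediately (or, in the promise case, one recurses on the last block by elimination), and otherwise recurse on any ``yes'' block with $r-1$ rounds of the promise variant. The inductive hypothesis gives a total of at most $rn^{1/r}$ queries, modulo careful handling of the ceilings in the partition sizes.

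\emph{Lower bound.} I would argue by induction on $r$ via an adversary. Given Round~1 queries $A_1, \ldots, A_{q_1}$, let
$$S_0 = [n]\setminus \bigcup_j A_j,\qquad B_j = A_j\setminus \bigcup_{k\neq j} A_k\ \ (1\le j\le q_1).$$
Answering all queries ``no'' is consistent with any $F\subseteq S_0$ and leaves the sub-problem $P(|S_0|,?,1,r-1)$; answering ``yes'' only to $A_j$ is consistent with any non-empty $F\subseteq B_j\cup S_0$ that meets $B_j$, and since the adversary can later answer ``no'' to any subsequent query contained in $S_0$ without loss, Questioner effectively faces the promise variant on $B_j$, of complexity $Q(|B_j|,r-1)$. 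The adversary now picks whichever option maximizes the residual complexity. Because $S_0, B_1, \ldots, B_{q_1}$ are pairwise disjoint, the largest of them has size at least $(|S_0|+\sum_j|B_j|)/(q_1+1)$; when the $A_j$ are pairwise disjoint this equals $n/(q_1+1)$. Substituting the inductive estimates and optimizing $q_1\approx n^{1/r}$ yields the claimed $rn^{1/r}-2r+1$.

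\emph{Main obstacle.} The principal difficulty is the overlapping-query case in the lower bound. If the $A_j$ intersect heavily then $S_0\cup\bigcup_j B_j$ may cover only a small part of $[n]$ and the simple pigeonhole above is not enough; one must either argue by convexity that disjoint queries are extremal for the adversary---so the $n/(q_1+1)$ estimate still applies---or exhibit an answer pattern $b$ with more than one ``yes'' producing an equally hard sub-problem. A secondary but essential bookkeeping point is propagating the $-1$ saved by the promise bound through the $r$-fold induction, which is what produces the precise additive constant $-2r+1$ in the theorem.
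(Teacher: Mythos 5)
Your upper bound is essentially the paper's (Katona's) algorithm: partition into $\lceil n^{1/r}\rceil$ nearly equal parts, skip one query per round by elimination, recurse on a yes-part (or on the skipped part), and ask everything in the last round; modulo the ceiling bookkeeping you wave at, that half is fine. The problem is the lower bound, and it is exactly the obstacle you flag yourself. Your adversary only ever considers answer patterns with at most one ``yes'', and the induced subproblems live on the pairwise disjoint sets $S_0, B_1,\dots,B_{q_1}$, where $B_j$ is the \emph{private} part of $A_j$. When the first-round queries overlap heavily (say $q_1\approx\log_2 n$ queries arranged like bit-masks, so that $|S_0|$ and every $|B_j|$ is $O(1)$ while $q_1\ll rn^{1/r}$), every branch available to your adversary is trivial, so the strategy forces almost nothing; yet the theorem's bound must still hold. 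The suggested rescue ``disjoint queries are extremal by convexity'' is not a proof and is not how the difficulty is actually resolved; the alternative you mention (an answer pattern with several yeses) is precisely the missing idea, and it has to be constructed, not just postulated.

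The paper's adversary does this as follows: order the round-$t$ queries greedily as $F_1,F_2,\dots$ so that at each step the set of \emph{new} elements $H_i=F_i\setminus\bigcup_{j<i}F_j$ is as small as possible (the $H_i$ are disjoint). If no $H_i$ exceeds the threshold $n_t$, answer ``no'' to everything, and only few elements get excluded; otherwise answer ``no'' to the prefix before the first large $H_i$ and ``yes'' to the \emph{entire suffix} $F_i,F_{i+1},\dots$, which guarantees that every yes-query retains at least $n_t+1$ elements outside the excluded set. This invariant (Lemma~\ref{altalanos}) is maintained through all $r-1$ rounds, with the bookkeeping that at most $k_tn_t$ elements are deleted from earlier yes-sets per round; a separate induction for the last round (Lemma~\ref{utso}), whose key step is that Questioner must ask a singleton, then yields $m_{r-1}-1$ (resp.\ $n-|G_{r-1}|$) final queries, and AM--GM applied to $k_1+\dots+k_{r-1}+n/\prod_{j}(k_j+1)$ gives the constant $-2r+1$. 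Your per-branch reduction to a promise variant $Q(|B_j|,r-1)$ would also need its own lower-bound lemma, which you never state; but even granting it, without the suffix-of-yeses device your induction does not get off the ground in the overlapping case, so the proposal has a genuine gap at the heart of the lower bound.
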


\vspace{4mm}


We have a larger gap in case we want to find more excellent elements.

\begin{thm}\label{main3}

For any $r \ge 1$ and $n \ge d \ge 2$ we have: $$r\lceil  (d^{r-1}n)^{1/r}\rceil \ge |P(n,?,d,r)| \ge r(dn)^{1/r}-2d-r(d+1)+2.$$

\end{thm}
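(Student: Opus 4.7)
Plan. The theorem splits naturally into an upper bound (via an explicit strategy) and a lower bound (via an adversary argument), and I would tackle them in that order.

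For the upper bound, set $k:=\lceil(d^{r-1}n)^{1/r}\rceil$. In round~$1$, partition $[n]$ into $k$ blocks of size at most $\lceil n/k\rceil$ and query each block (cost $k$). Let $M$ be the set of blocks answered ``yes.'' In Case~A ($|M|\ge d$), select $d$ yes-blocks and in parallel, over the remaining $r-1$ rounds, apply the strategy of Theorem~\ref{main1} in each, searching for one excellent per block. The identity $(n/k)^{1/(r-1)}=(n/d)^{1/r}$ shows that the cost of the $d$ parallel sub-searches is exactly $d(r-1)(n/d)^{1/r}=(r-1)k$, for a total of $rk$. In Case~B ($|M|<d$), recurse via $P(|M|\lceil n/k\rceil,?,d,r-1)$ on the union of yes-blocks; using $|M|\le d-1$ together with the elementary bound $[d^{r-2}(d-1)]^{1/(r-1)}\le d$, the induction hypothesis bounds the sub-cost by $(r-1)k$. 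The total is at most $rk$ in either case, giving the claimed upper bound. The trivial base case is $r=1$, where one queries all singletons.

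For the lower bound, I construct an adversary who, in each round, maintains a candidate set $S_i\subseteq[n]$ with the invariant that every yes-query $Y$ posed so far satisfies $|Y\cap S_i|\ge 2$. Upon seeing the $q_i$ queries of round~$i$, the adversary commits to a subset $S_i\subseteq S_{i-1}$ such that every current query $A$ has $|A\cap S_i|\in\{0\}\cup[2,\infty)$, and answers $A$ with ``yes'' exactly when $|A\cap S_i|\ge 2$. If $|S_r|\ge d$ is preserved through the last round, the adversary declares $E:=S_r$: this $E$ is consistent with every answer, it has $|E|\ge d$ (so the Questioner cannot output ``fewer than $d$ excellents''), and no element of $S_r$ is forced to lie in every valid completion (because removing any single element still leaves a valid $E$, since every yes-query retains at least one witness in $S_r$). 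Hence the Questioner can name no $d$ specific excellents either, and so loses.

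The main obstacle is the per-round analysis: bounding $|S_{i-1}|-|S_i|$ in terms of $q_i$ while simultaneously enforcing $|A\cap S_i|\ne 1$ for the current queries and $|Y\cap S_i|\ge 2$ for the accumulated yes-queries. A threshold/greedy selection of $S_i$, together with a small additive discard (at most $d+1$ per round) to keep the old yes-queries non-isolated, shows that the adversary succeeds whenever the total query count is below $r(dn)^{1/r}-2d-r(d+1)+2$; the explicit constants $-2d$ and $-r(d+1)+2$ reflect, respectively, the initial and per-round buffers needed to absorb this isolation-avoidance. The delicate interaction between yes-labels (which constrain later rounds through the two-witness requirement) and no-labels (which shrink $|S|$) is the crux, and calibrating the thresholds round by round is the work of the argument.
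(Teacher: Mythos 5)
Your upper bound is fine: it is essentially the paper's algorithm (partition into $k=\lceil (d^{r-1}n)^{1/r}\rceil$ parts, then continue in at most $d$ parts per round), repackaged as a recursion, and the arithmetic works modulo the same ceiling bookkeeping the paper also glosses over.

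The lower bound, however, has a genuine gap, and it is not just that the per-round calibration is deferred (you yourself call it ``the work of the argument''). The single-candidate-set invariant --- keep $S_i$ with every yes-query retaining at least two witnesses in $S_i$, win if $|S_r|\ge d$ --- is qualitatively too weak to give the factor $d^{1/r}$. The certified bound of such an argument is only the number of queries the Questioner needs to make the invariant unmaintainable, and the Questioner can do that with roughly $rn^{1/r}$ queries, independently of $d$: in round $1$ he partitions $[n]$ into $\lceil n^{1/r}\rceil$ parts and asks all of them (if every part has trace $\le 1$ on $S_1$ then $S_1=\emptyset$, so some part $P_1$ is answered yes); in round $2$ he partitions $P_1$ into $\lceil n^{1/r}\rceil$ parts and asks them --- since $S_2\subseteq S_1$, either some subpart has trace $\ge 2$ (a new yes-part $P_2$ to drill into) or all traces in $P_1$ are $\le 1$, in which case the ``no trace equal to $1$'' rule forces $P_1\cap S_2=\emptyset$ and the two-witness invariant for $P_1$ is already broken; iterating, after $r-1$ rounds the surviving chain part $P_{r-1}$ has size about $n^{1/r}$, and asking all its singletons in round $r$ forces $P_{r-1}\cap S_r=\emptyset$, again breaking the invariant. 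This costs about $r\lceil n^{1/r}\rceil$ queries, which for $d\ge 2$ and large $n$ is strictly below the claimed bound $r(dn)^{1/r}-2d-r(d+1)+2$; the Questioner has not actually solved the $d$-element problem with these queries, but your proof has no leverage once the invariant fails, so it cannot conclude. The missing idea is exactly what the paper's Lemmas \ref{daltalanos}, \ref{daltalanos2} and \ref{dutso} supply: instead of a two-witness condition on individual yes-queries, the Adversary must control, for every $i\le d$, the minimum size $m_t(i)$ of the union of $i$ surviving yes-queries (kept $\ge i\,n_t$ via the ``good family'' greedy selection of Claim \ref{bound}, with a modified threshold $n'_{r-1}=\lfloor n_{r-2}/(k_{r-1}+d)\rfloor$ in the penultimate round), and the last-round lemma then shows the Questioner needs at least $m_{r-1}(d)-d$ (or $n-|G_{r-1}|-d+1$) queries; it is the $d$-fold union quantity, not any single-set witness count, that produces the term $d\,n'_{r-1}$ and hence $(dn)^{1/r}$ after the AM--GM optimization. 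Relatedly, your claim that an additive discard of at most $d+1$ per round suffices is unsubstantiated and cannot be repaired within the single-set framework for the reason above.
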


However note that for two rounds the upper and lower bounds are asymptotically equal as $n$ tends to infinity.

\begin{cor}\label{main2} For any $n \ge d \ge 2$ we have:$$2\lceil  (dn)^{1/2}\rceil \ge |P(n,?,d,2)| \ge 2\lceil  (dn)^{1/2}\rceil-4d-2.$$

\end{cor}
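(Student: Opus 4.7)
The plan is to derive Corollary \ref{main2} as a direct specialization of Theorem \ref{main3} at $r = 2$, followed by a trivial ceiling adjustment in the lower bound to align it with the ceiling appearing in the upper bound.

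First, for the upper bound, substituting $r = 2$ in the expression $r\lceil(d^{r-1}n)^{1/r}\rceil$ from Theorem \ref{main3} produces exactly $2\lceil(dn)^{1/2}\rceil$, matching the corollary verbatim, so this direction requires nothing beyond invoking the theorem.

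Next, for the lower bound, the same substitution in $r(dn)^{1/r} - 2d - r(d+1) + 2$ gives $2(dn)^{1/2} - 2d - 2(d+1) + 2 = 2(dn)^{1/2} - 4d$. To recast this in the ceiling form used in the corollary, I would apply the elementary inequality $(dn)^{1/2} \ge \lceil(dn)^{1/2}\rceil - 1$, which downgrades the bound to $2\lceil(dn)^{1/2}\rceil - 4d - 2$, exactly as stated.

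There is no substantive obstacle in this derivation; the real content lies entirely in Theorem \ref{main3}. The point of isolating the $r=2$ case as a corollary is to highlight that the upper and lower bounds then differ only by the additive constant $4d + 2$, while both grow like $2\sqrt{dn}$, so the two bounds are asymptotically equal as $n \to \infty$ with $d$ fixed.
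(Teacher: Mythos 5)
Your derivation is correct and matches the paper's (implicit) route: Corollary \ref{main2} is indeed just Theorem \ref{main3} specialized to $r=2$, with the upper bound carrying over verbatim and the lower bound $2(dn)^{1/2}-4d$ converted via $(dn)^{1/2}\ge \lceil (dn)^{1/2}\rceil -1$ into the stated form. Nothing further is needed.
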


\section{Proofs}

\subsection{Proof of Theorem \ref{main1}}

First we prove the upper bound. To do this we describe an algorithm (given by Katona \cite{K2011}). In the first round Questioner partitions $[n]$ into $\lceil n^{1/r}\rceil$ parts such that their sizes differ by at most one. Then he asks all of these parts except one, $C$ which is one of the smaller parts. Then he picks one of the parts that were answered yes, or if there is no such part, then he picks $C$. In the next round he continues on the picked part recursively, i.e. he partitions it into $\lceil n^{1/r}\rceil$ parts such that their sizes differ by at most one and asks all but one of the smaller parts, and so on. In the last round if all the previous answers were no, he changes the algorithm and asks all the parts instead. It is easy to see that in the last round the parts are of size at most one, thus he finds an excellent element if there is any, and that in each round at most $\lceil n^{1/r}\rceil$ queries were asked.

To prove the lower bound we describe a strategy for Adversary to force Questioner to ask at least $r(n^{1/r}-\frac{r-1}{r}-1)$ questions before reaching his goal. First we introduce the following notation. For $1\le i \le r$ let $\cF_i$ be the family of the queries asked by the Questioner in round $i$ and $k_i:=|\cF_i|$. Let $\cF_i^Y\subset\cF_i$ be the family of queries that are answered yes by Adversary, and let $\cF_i^N\subset \cF_i$ be the family of those queries that are answered no (and so $\cF_i^N = \cF_i \setminus \cF_i^Y$). Let $G_i:=\bigcup(\cup_{j=1}^{i} \cF_j^N)$, the set of those elements that are known to be not excellent after round $i$. Informally we can forget about them, and restrict the underlying set to $[n]\setminus G_i$ after round $i$. Finally let $\cG_i:=\cup_{j=1}^{i} (\cF_j^Y\setminus G_i)$ the set of the queries answered yes during the first $i$ rounds restricted to $[n]\setminus G_i$, $m_i:=\min \{|G|: G\in \cG_i\}$ the cardinality of the smallest set in $\cG_i$ and $n_i:=\lfloor n_{i-1}/(k_i+1)\rfloor\ge n/\Pi_{j=1}^{i} (k_j+1)-i$ (with $n_0=n$, and the latter inequality is an easy consequence of the fact that $k_i \ge 0$). We remark that when we describe how Adversary answers the queries in round $i$, we use only information that Adversary has at that point. For example, $k_1, \dots, k_i$ are known, but $k_{i+1}$ is not known after Questioner poses the questions in round $i$.

When an element appears in a query that is answered no, we know that that element cannot be excellent, thus it does not matter if a later query contains it or not. Hence we can assume without loss of generality that no elements of $G_i$ appear in a member of $\cF_j$ for $j>i$.

The proof of the lower bound for the case of two rounds by Katona essentially consists of two steps. First it is shown that the first round of queries can be answered (by Adversary) in a way that either $m_1$ is large or all the answers are no and $|G_1|$ is relatively small. Afterwards it is shown that in the last round if $\cF_1^Y$ is not empty, at least $m_1-1$ queries are needed, or if $\cF_1^Y$ is empty, then at least $n-|G_1|$ queries are needed. Here in Lemma \ref{altalanos} we extend the first step to more rounds and for sake of completeness we reprove the lemma about the last step (Lemma \ref{utso}). 

Now we show how Adversary should answer during the first $r-1$ rounds.

\begin{lem}\label{altalanos} Adversary can answer $\cF_1,\dots, \cF_{r-1}$ such a way that for all $1 \le t \le r-1$ we have either: 

\vspace{1mm}

$\bullet$ $n_t\le m_t-1$, or 

\vspace{1mm}

$\bullet$ all the answers are no in the first t rounds and $|G_t|\le n-n_t$. 

\end{lem}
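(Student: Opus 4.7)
The plan is to prove the lemma by induction on $t$, using a greedy-removal strategy in each round that uniformly handles both cases of the inductive hypothesis. Write $L_t := [n] \setminus G_t$ for the "live" set after round $t$; the hypothesis at $t-1$ says either that all prior answers are no with $|L_{t-1}| \ge n_{t-1}$ (case A), or that every $F \in \cG_{t-1}$ has $|F| \ge m_{t-1} \ge n_{t-1}+1$ and in particular $|L_{t-1}| \ge n_{t-1}+1$ (case B). Without loss of generality the round-$t$ queries $F_1, \dots, F_{k_t}$ lie inside $L_{t-1}$, as already observed in the paper.

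The greedy strategy in round $t$ is: initialize $Y = \{1, \dots, k_t\}$, $N = \emptyset$, and $R = L_{t-1}$; while some $j \in Y$ satisfies $|F_j \cap R| \le n_t$, transfer $j$ from $Y$ into $N$ and replace $R$ by $R \setminus F_j$. When this halts, Adversary answers yes to the queries indexed by $Y$ and no to those indexed by $N$. Then at termination $R = L_{t-1} \setminus \bigcup_{j \in N} F_j = L_t$, and each $j \in Y$ satisfies $|F_j \cap L_t| \ge n_t + 1$ by the halting condition.

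The single quantitative bound powering everything is
\[
|L_{t-1} \setminus L_t| = \left|\bigcup_{j \in N} F_j\right| = \sum_{j \in N} |F_j \cap R_j| \le |N| \cdot n_t \le k_t\, n_t,
\]
where $R_j$ is the value of $R$ when $j$ is removed and the second equality uses the disjointness of the incremental contributions. Combining this with $n_{t-1} \ge n_t(k_t+1)$, one gets $|L_t| \ge n_{t-1} - k_t n_t \ge n_t$, and, more importantly, any set $F \subseteq L_{t-1}$ with $|F| \ge n_{t-1}+1$ automatically satisfies $|F \cap L_t| \ge (n_{t-1}+1) - k_t n_t \ge n_t+1$. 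From here the two cases finish quickly. In case A at $t-1$: if $Y = \emptyset$, case A propagates with $|G_t| \le n - n_t$; if $Y \neq \emptyset$, the new yes-sets have size $\ge n_t+1$ in $L_t$, giving $m_t \ge n_t+1$ (case B at $t$). In case B at $t-1$: the second estimate shows every old $F \in \cG_{t-1}$ survives with $|F \cap L_t| \ge n_t+1$, and any new yes-sets satisfy this by construction, so $m_t \ge n_t+1$ regardless of $Y$.

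The point I expect to be the main obstacle is precisely the case B maintenance: a priori, the no-queries chosen by the greedy could carve deeply into some old yes-set $F \in \cG_{t-1}$, dropping $|F \cap L_t|$ below $n_t+1$. The whole purpose of the uniform bound $|L_{t-1} \setminus L_t| \le k_t n_t$ is that the damage to \emph{every} old yes-set is capped by the same quantity, calibrated exactly so that sets of size at least $n_{t-1}+1$ come out with at least $n_t+1$ elements left. Once this bound is in hand, the rest of the argument is routine verification.
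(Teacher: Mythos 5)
Your proof is correct and follows essentially the same strategy as the paper: induction on $t$, a greedy choice of no-answers so that each no-answered query removes at most $n_t$ fresh elements, the total-damage bound $k_t n_t$, and the inequality $n_{t-1}\ge (k_t+1)n_t$ to preserve either $|G_t|\le n-n_t$ or $m_t\ge n_t+1$. Your removal-loop formulation is just a repackaging of the paper's greedy ordering (and is in fact the $d=1$ instance of the ``good family'' device the paper uses later for Lemma \ref{daltalanos}), so no new ideas or gaps are involved.
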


\begin{proof} We use induction on $t$ and let us consider round $t$. 

\vspace{3mm}

If $t=1$, then Adversary orders the elements of $\mathcal{F}_1$ in the following way:

\vspace{2mm}

$\bullet$ let $H_1:=F_1$ be one of the smallest sets in $\mathcal{F}_1$,

\vspace{1mm}

$\bullet$ for $2 \le i \le |\mathcal{F}_1|$ let $F_i \in \cF_1 \setminus \{F_1,F_2,...,F_{i-1}\}$ be such that the cardinality of $H_i:=F_i\setminus \cup_{j=i}^{i-1}F_j$ is as small as possible. Note that the sets $H_i$ are disjoint from each other. 

\vspace{1mm}

After this if there is no $i$ with $|H_i| \ge n_1+1$, then Adversary answers no for all questions in $\cF_1$ and we clearly have $|G_1| \le n- n/(k_1+1)\le n-n_1$.

However if there is an $i$ with $|H_i| \ge n_1+1$, then Adversary chooses the smallest such $i$ and answers no to $F_j$ if $j<i$ and yes if $j\ge i$. So each query in $\cF_1^Y$ contains a least $|H_i|\ge n_1+1$ elements not in $\cup_{j=1}^{i-1} H_j(=G_1)$ and we are done with the case $t=1$.

\vspace{3mm}

So assume that $t \ge 2$ and first consider the case when Adversary answered in the previous rounds only no answers. Then - by induction - there are at least $n_{t-1}$ elements we do not know anything about. Adversary restricts the queries to those elements, and do the same as in the first round. That results in either that $m_t-1 \ge n_{t-1}/(k_t+1)\ge n_t$ or only no answers and at least $n_{t-1}/(k_t+1)\ge n_t$ many elements still not appearing in any queries.

\vspace{1mm}

Now we assume that Adversary answered yes at least once in the first $t-1$ rounds, and then every element of $\cG_{t-1}$ has size at least $n_{t-1}$. In this case Adversary essentially do the same as in the first round, so orders the elements of $\cF_t$ the following way (note that every element of $\cF_t$ is in the complement of $G_{t-1}$): 

\vspace{2mm}

$\bullet$ let $H_1:=F_1$ be one of the smallest sets in $\cF_t$, and 

\vspace{1mm}

$\bullet$ for $2 \le i \le |\cF_t|$ let $F_i \in \cF_t \setminus \{F_1,F_2,...,F_{i-1}\}$ is such that the cardinality of $H_i:=F_i\setminus \cup_{j=i}^{i-1}F_j$ is as small as possible. Note that the sets $H_i$ are disjoint from each other. 

\vspace{2mm}

Let us assume first that there is an $i$ with $|H_i|\ge n_t+1$, and consider the smallest such $i$. Then Adversary answers no to $F_j$ if $j<i$ and yes if $j\ge i$. Then each query in $\cF_t^Y$ contains a least $|H_i|\ge n_t+1$ elements not in $\cup_{j=1}^{i-1} H_j$. This means those members of $\cG_t$ that correspond to queries in round $t$ have indeed size at least $n_t+1$. The other members - by induction - had size at least $n_{t-1}+1$ before the round, and at most $|\cup_{j=1}^{i} H_j|\le k_t n_t$ elements were moved to $G_t$, thus deleted from them in the current ($t^{th}$) round. Then at least $n_t+1$ remains in each.
 
If there is no such $i$, then Adversary answers no to every query. As earlier there was a yes answer, we still have to show that $n_t\le m_t-1$, but this time we do not have to deal with the new queries. For the earlier queries the same argument works: at most $|\cup_{j=1}^{i} H_j|\le k_t n_t$ elements were deleted from each set in $\cG_{t-1}$ and we are done with the proof of Lemma \ref{altalanos}.

\end{proof}

The following lemma, which deals with the last round is essentially the generalization of Lemma 3.6 in \cite{K2011}, however we provide a proof somewhat more compact than the one in \cite{K2011}, since we want to generalize the argument during
the proof of Theorem \ref{main3}.

\begin{lem}\label{utso} For $r \ge 2$ to be able to find an excellent element in the $r^{th}$ round Questioner needs at least $m_{r-1}-1$ queries if there is at least one yes answer in the first $r-1$ rounds and at least $n- |G_{r-1}|$ queries are needed if all the answers were no in the first $r-1$ rounds.

\end{lem}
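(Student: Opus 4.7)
The plan is to split on whether any of rounds $1,\dots,r-1$ produced a yes answer, handling the two sub-claims of the lemma by a common linear-algebraic argument. Suppose first that all earlier answers were no, set $m=n-|G_{r-1}|$, and assume Questioner asks $|\cF_r|<m$ queries in round $r$; I would describe a winning Adversary strategy. If $\cF_r$ fails to cover $[n]\setminus G_{r-1}$, Adversary answers no to everything, and both $E=\emptyset$ and $E=\{x\}$ for any uncovered $x$ remain consistent, so Questioner has no correct output. Otherwise every $x\in[n]\setminus G_{r-1}$ has a non-zero profile $p(x)\in\{0,1\}^{|\cF_r|}$, and since we have $m>|\cF_r|$ non-zero vectors in $\mathbb{R}^{|\cF_r|}$, they are linearly dependent: some $c\ne 0$ satisfies $\sum_x c_x p(x)=0$, and non-negativity of the $p(x)$ together with $p(x)\ne 0$ forces $c$ to have both positive and negative entries. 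With $S^+=\{x:c_x>0\}$ and $S^-=\{x:c_x<0\}$, the coordinate-wise identity $\sum_{x\in S^+}c_xp(x)_i=\sum_{x\in S^-}(-c_x)p(x)_i$ (each side a sum of non-negative terms) shows that $F_i$ meets $S^+$ iff it meets $S^-$. Adversary commits to $S^+$; the disjoint non-empty alternative $S^-$ yields the same answer pattern, so no element is forced excellent and the non-zero pattern rules out declaring the empty set.

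For the yes case, pick $F^*\in\cG_{r-1}$ with $|F^*|=m_{r-1}$ and assume $|\cF_r|<m_{r-1}-1$. The plan is to rerun the above argument restricted to $F^*$, after augmenting each profile $p(x)$ for $x\in F^*$ with an extra coordinate equal to $1$, encoding the constraint $E\cap F^*\ne\emptyset$ that the yes answer on $F^*$ imposes. This produces $m_{r-1}$ non-zero vectors in $\mathbb{R}^{|\cF_r|+1}$, and since $m_{r-1}>|\cF_r|+1$ they are linearly dependent. The added coordinate turns the relation into $\sum_{x\in F^*}c_x=0$ together with $\sum_{x\in F^*}c_x p(x)=0$, and the zero-sum condition again forces both signs, yielding disjoint non-empty $S^+,S^-\subseteq F^*$ with identical round-$r$ answer patterns, each automatically meeting $F^*$. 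Adversary commits to $S^+$ and retains $S^-$ as an indistinguishable alternative.

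The main obstacle I foresee is this second case when $\cG_{r-1}$ contains yes-answered queries beyond $F^*$: the alternative commitment $S^-$ must hit each $G\in\cG_{r-1}$ as well. If both $S^+$ and $S^-$ already meet every such $G$ the argument terminates as written; otherwise I would have to extend the two commitments by disjoint helper sets drawn from $[n]\setminus G_{r-1}\setminus F^*\setminus\bigcup\cF_r$ that cover the unhit yes queries without altering the round-$r$ answer pattern. Showing that such disjoint helpers can always be found --- possibly by further augmenting the profile matrix with a coordinate marking membership in each remaining $G$ and repeating the linear-dependence argument --- is the most delicate point and where I would spend the most effort in a clean write-up.
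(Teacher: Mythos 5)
Your all-no case is complete and correct, and it is a genuinely different route from the paper: the paper disposes of that case by citing Katona's non-adaptive theorem as the base of an induction, whereas you obtain the bound $n-|G_{r-1}|$ directly by a linear-dependence argument producing two disjoint, non-empty, consistent candidate sets $S^+,S^-$ with identical round-$r$ answer patterns. That part I would accept as written.

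The genuine gap is exactly the point you flag at the end: in the yes case your candidate sets $S^+,S^-\subseteq F^*$ must be consistent with \emph{every} earlier yes answer, not only with the one on $F^*$, and neither of your proposed repairs works as stated. Helper elements in $[n]\setminus G_{r-1}\setminus F^*\setminus\bigcup\cF_r$ need not exist (Questioner may cover everything outside $G_{r-1}$ with round-$r$ queries), and you would in any case need two \emph{disjoint} helper systems; augmenting the matrix with one coordinate per remaining yes-set ruins the dimension count (you only have $m_{r-1}\ge|\cF_r|+2$ vectors coming from $F^*$, while the dimension grows with $|\cG_{r-1}|$), and even a dependence there only yields ``meets $S^+$ iff meets $S^-$'' for each yes-set, not that both sides actually hit it. The argument can be completed along your lines, but differently: every $G\in\cG_{r-1}$ satisfies $|G|\ge m_{r-1}>|\cF_r|+1$, so you can run your augmented dependence argument inside each $G$ separately, getting disjoint non-empty $S_G^{+},S_G^{-}\subseteq G$ with equal round-$r$ hitting patterns; Adversary answers according to $\bigcup_G S_G^{+}$, and for any element $x$ that Questioner points, the scenario formed by taking from each pair the side avoiding $x$ is consistent, non-empty, and omits $x$. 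For comparison, the paper's proof sidesteps all of this bookkeeping by induction on $n-|G_{r-1}|+m_{r-1}$: it forces Questioner to ask a singleton $\{x\}$ (Claim \ref{redik}), answers no to it --- consistent with all earlier yes answers precisely because $m_{r-1}\ge 2$ --- deletes $x$, and loses at most $1$ from $m_{r-1}$; your approach, once patched, is more self-contained (it reproves the non-adaptive base case) but requires the extra consistency analysis above.
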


\begin{rem} Before starting the proof, note that in Lemma \ref{utso} there is no indication about Adversary's strategy during the first $r-1$ rounds. So the statement of the lemma is true for any strategy.

\end{rem}

\begin{proof}[Proof of Lemma \ref{utso}]

We prove by induction on $n- |G_{r-1}|+m_{r-1}$.

\vspace{2mm}

Note that if $m_{r-1}=0$ (so there were no 'yes' answers during the first $r-1$ rounds), then we are done by the result of Katona (\cite{K2011}, Theorem 2.5) on the non-adaptive version of this problem.

If $m_{r-1}=1$, then we are also done, since there is a one-element query with containing exactly one excellent element.

Using that $n- |G_{r-1}| \ge m_{r-1}$, we are done with the cases $n- |G_{r-1}|+m_{r-1}=1,2,3$.

\vspace{3mm}

So suppose $n- |G_{r-1}|+m_{r-1} \ge 4$ and $m_{r-1} \ge 2$. We claim the following:

\begin{clm}\label{redik}
Questioner should ask a one-element set in the $r^{th}$ round.

\end{clm}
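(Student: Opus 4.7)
My plan is to prove Claim \ref{redik} by contradiction. Suppose $\cF_r$ contains no singleton, so every $F \in \cF_r$ has $|F| \geq 2$. The strategy is to let Adversary respond to one particular query $F \in \cF_r$ so that the subgame on $\cF_r \setminus \{F\}$ is a strictly smaller instance of the problem to which the inductive hypothesis of Lemma \ref{utso} applies. This forces the remaining $k_r-1$ queries to number at least $m_{r-1}-1$, yielding $k_r \geq m_{r-1}$, which is strictly more than what Lemma \ref{utso} claims, and so an optimal Questioner must include a singleton in $\cF_r$.

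First I would pick an arbitrary $F \in \cF_r$ and split into two cases based on how $F$ relates to $\cG_{r-1}$. If $F$ is disjoint from $\bigcup \cG_{r-1}$, Adversary answers ``no'': the parameter $n - |G|$ drops by $|F| \geq 2$ while $m$ stays at $m_{r-1}$, so the inductive hypothesis on $\cF_r \setminus \{F\}$ finishes this case. If $F$ meets some $H \in \cG_{r-1}$, Adversary answers ``no'' whenever that preserves consistency (i.e., no $H' \in \cG_{r-1}$ is entirely contained in $F$); otherwise Adversary answers ``yes'', adding $F$ to $\cG_r$ and updating the minimum to $\min(m_{r-1}, |F|)$. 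In each sub-case I would verify that $n - |G| + m$ strictly decreases, so the inductive hypothesis applies and bounds $k_r - 1$ from below by $m_{r-1}-1$.

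The main obstacle I anticipate is the ``yes'' sub-case when $|F| \geq m_{r-1}$: here $m$ does not strictly decrease and $|G|$ is unchanged, so the induction parameter fails to drop. To handle this I would refine the choice of $F$ --- for instance, selecting the query of minimum cardinality in $\cF_r$ so that $|F|$ is as small as possible, or observing that if some $H \in \cG_{r-1}$ satisfies $H \subseteq F$ then the ``yes'' answer to $F$ is already implied by the previous ``yes'' to $H$, so $F$ carries no new information and can be replaced by a strict subset (still of size $\geq 2$) without altering Questioner's knowledge. With this refinement the induction parameter strictly drops in every sub-case, and the claim follows.
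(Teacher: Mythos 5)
Your approach misses the actual content of the claim, which is a \emph{feasibility} statement, not a counting statement: if $m_{r-1}\ge 2$ and every query in $\cF_r$ has size at least two, then Questioner cannot solve the problem at all, no matter how many such queries he asks. The paper's proof is a one-step consistency argument: Adversary answers yes to every round-$r$ query; since every yes-answered set (old or new) keeps at least two elements outside $G_{r-1}$, the scenario ``all of $[n]\setminus G_{r-1}$ is excellent except the single element Questioner points to'' is consistent with all answers, so no element can be pointed with certainty. Your counting scheme cannot yield this: at best it would show the dichotomy ``either $\cF_r$ contains a singleton or $k_r\ge m_{r-1}$'', and your closing inference ``so an optimal Questioner must include a singleton'' is a non-sequitur --- a Questioner asking many size-two queries satisfies $k_r\ge m_{r-1}$ without any singleton, so no contradiction is reached, and the claim as stated (which Lemma \ref{utso} then uses to delete a singleton and recurse) is not established.

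Moreover, even the weaker dichotomy does not follow from your reduction, because the inductive hypothesis of Lemma \ref{utso} bounds the remaining queries below by the \emph{new} value of $m$ minus one, not by $m_{r-1}-1$. If Adversary answers no to a set $F$ of size at least two that meets $\bigcup\cG_{r-1}$ (but contains no member of it), each member of $\cG_{r-1}$ can lose up to $|F|$ elements, so the new $m$ may be far below $m_{r-1}-1$; if Adversary answers yes to $F$ with $|F|<m_{r-1}$, the new minimum drops to at most $|F|$; in both sub-cases you only get $k_r\ge$ (new $m$), which is not $\ge m_{r-1}$. Your fixes for the stuck sub-case do not repair this: choosing $F$ of minimum cardinality changes nothing in the estimates above, and ``replacing $F$ by a strict subset'' is not a move available to Adversary, who cannot alter Questioner's queries --- if $F$ contains a member of $\cG_{r-1}$ it is simply redundant and must be answered yes, but then $n-|G|+m$ does not decrease and your induction stalls (one would need a secondary induction on $k_r$, which you do not set up). This is precisely the bookkeeping that the paper avoids by first forcing a \emph{singleton} to exist, for which a no answer decreases $n-|G_{r-1}|$ by exactly one and $m_{r-1}$ by at most one.
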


\begin{proof}[Proof of Claim \ref{redik}]
We prove by contradiction. Suppose all queries are of size at least two and all the answers are yes in the $r^{th}$ round, and Questioner can point an excellent element. Let us assume all the elements in $[n] \setminus G_{r-1}$ are excellent, except the one Questioner pointed. This is compatible with the previous answers using that $m_{r-1} \ge 2$, and also with the new answers, a contradiction.

\end{proof}

To continue the proof of Lemma \ref{utso} we can suppose that Questioner asks a one element query  ($x \in [n] \setminus G_{r-1}$) in the $r^{th}$ round. But then Adversary can say no to $\{x\}$ first (this is compatible with the answers in the first $r-1$ rounds, since $m_{r-1} \ge 2$) and consider it as if it were asked during the first $r-1$ rounds and delete $x$ from the remaining queries asked in the $r^{th}$ round. Note that in this new scenario $m_{r-1}$ can decrease by at most 1. As $|n \setminus (G_{r-1} \cup x)| < n - |G_{r-1}|$, since $x \not \in G_{r-1}$ by induction we know that Questioner should ask at least $m_{r-1}-1$ queries and we are done with the proof of Lemma \ref{utso}.

\end{proof}

So Lemma \ref{utso} and Lemma \ref {altalanos} shows that we have that $$k_1+...+k_{r-1}+\frac{n}{(k_1+1)...(k_{r-1}+1)}-r$$ is a lower bound on $|P(n,?,1,r)|$. Using some reorganization and the inequality of arithmetic and geometric means we have:$$k_1+...+k_{r-1}+\frac{n}{(k_1+1)...(k_{r-1}+1)} \ge r(n^{1/r}-\frac{r-1}{r}-1),$$

and we are done with the lower bound and with the proof of Theorem \ref{main1}.

\qed

\subsection{More excellent elements, proof of Theorem \ref{main3}}

The upper bound is given by a straightforward extension of the algorithm constructed in the proof of Theorem \ref{main1}. For simplicity of the description here Questioner will ask every part in a partition. It is not always necessary, but it adds at most $d$ additional queries in each round. (Thus the following algorithm could be easily improved a little.)

In the first round Questioner partitions $[n]$ into $k:=\lceil  (d^{r-1}n)^{1/r}\rceil$ parts such that their sizes differ by at most one. He asks all of them. Then he picks $d$ of the parts that were answered yes, or if there are less than $d$ such parts, then he picks all of them. He continues on each of these parts simultaneously, i.e. he partitions each of them into $\lceil (n/k)^{1/(r-1)}\rceil$ parts such that their sizes differ by at most one, and asks each of those parts. Then he picks $d$ or all of the parts that were answered yes, and so on. At the end of the  first round the size of a part is at most $n$ divided by $k$ and then in further rounds divided by $\lceil (n/k)^{1/(r-1)}\rceil$, altogether $r-1$ times, thus it is at most $1$ at the end of the $r^{th}$ round. If at any round there were $d$ different yes answers, he finds an excellent element in each of those sets, if not, then at the last round he asks every singleton that can still be excellent, thus finds all of the excellent elements.

In the first round there are $k$ queries, in any of the later rounds there are at most $d\lceil (n/k)^{1/(r-1)}\rceil$ queries, which give altogether at most $$k+(r-1)d\lceil (n/k)^{1/(r-1)}\rceil \le r\lceil  (d^{r-1}n)^{1/r}\rceil$$ queries, that proves the upper bound.

\vspace{3mm}

For the lower bound we prove the generalizations of Lemma \ref{utso} and Lemma \ref{altalanos}, but we need to modify Adversary's strategy for round $r-1$. We use the notation introduced in the proof of Theorem \ref{main1}. Additionally, for a family $\cF$ of subsets of $[n]$ and $1 \le i \le d$ let

$$m(i,\cF):=\min\{|\cup^i_{j=1} A_j|: A_j\in \cF \textrm{ for } 1 \le j \le i\},$$

\noindent
and for $1 \le i \le d$ and $1 \le t \le r-1$ let $$m_t(i):=m(i,\cG_t).$$ 
We also assume that the members of $\cF_t$ are in the complement of $G_{t-1}$.

\begin{lem}\label{daltalanos} Adversary can answer $\cF_1,\dots, \cF_{r-2}$ such a way that for all $1 \le t \le r-2$ either 

\vspace{2mm}

$\bullet$ all the answers are no in the first $t$ rounds and $|G_t|\le n-n_t$, or 

\vspace{1mm}

$\bullet$ for $1 \le i \le d$ we have $m_t(i)\ge in_t$. 

\end{lem}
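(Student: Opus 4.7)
The plan is to mirror the structure of Lemma \ref{altalanos}, proceeding by induction on $t$, but with Adversary running a greedy sensitive to subfamilies of size up to $d$ rather than to single sets. In round $t$, starting with $\cF_t^N = \emptyset$, Adversary repeatedly searches for some $\cE \subseteq \cF_t \setminus \cF_t^N$ with $1 \le |\cE| \le d$ and $|\cup \cE \setminus (G_{t-1} \cup \cup \cF_t^N)| < |\cE| n_t$; whenever such an $\cE$ is found, all of its members are moved into $\cF_t^N$. The loop terminates when no such $\cE$ remains. Two observations follow immediately: first, $|\cup \cF_t^N| < |\cF_t^N|\, n_t \le k_t n_t$, because each iteration adds $|\cE|$ sets to $\cF_t^N$ while contributing strictly fewer than $|\cE| n_t$ new elements to the union; second, for every $\cE \subseteq \cF_t^Y := \cF_t \setminus \cF_t^N$ with $|\cE| \le d$, one has $|\cup \cE \setminus G_t| \ge |\cE| n_t$.

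The inductive step splits into two sub-cases. If the first alternative of the lemma held at round $t - 1$ and the greedy leaves $\cF_t^Y$ empty, then the first alternative persists at round $t$: combining $|G_{t-1}| \le n - n_{t-1}$ with the first greedy bound and with $n_{t-1} \ge (k_t+1) n_t$ (which follows from $n_t = \lfloor n_{t-1}/(k_t+1) \rfloor$) gives $|G_t| \le n - n_t$.

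In all other situations I verify the second alternative. Take $\cE \subseteq \cG_t$ of size $i \le d$ and decompose it as $\cE = \cE_{old} \cup \cE_{new}$, with $\cE_{old}$ inherited from $\cG_{t-1}$ and of size $a$, $\cE_{new} \subseteq \cF_t^Y$ of size $b$, and $a + b = i$. The pure-new case $a = 0$ is handled directly by the second greedy property. For $a \ge 1$, the inductive hypothesis $m_{t-1}(a) \ge a n_{t-1}$ combined with the restriction by $\cup \cF_t^N$ yields $|\cup \cE \setminus G_t| \ge |\cup \cE_{old}| \ge a n_{t-1} - |\cup \cF_t^N|$.

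The main obstacle appears in the mixed configuration $a = 1$, $b \ge 1$: using only the crude estimate $|\cup \cF_t^N| < k_t n_t$ one gets $|\cup \cE \setminus G_t| > n_{t-1} - k_t n_t \ge n_t$, which falls short of the required $(1+b) n_t$. The resolution is that, since the $b$ members of $\cE_{new}$ lie in $\cF_t^Y$, we have $|\cF_t^N| \le k_t - b$, so the first greedy observation sharpens to $|\cup \cF_t^N| < (k_t - b) n_t$. Plugging this into the estimate gives
\begin{equation*}
|\cup \cE \setminus G_t| \ge a n_{t-1} - |\cup \cF_t^N| > a(k_t+1) n_t - (k_t - b) n_t = \bigl(i + (a-1) k_t\bigr) n_t \ge i n_t,
\end{equation*}
which closes the induction.
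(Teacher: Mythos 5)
Your proposal is correct and follows essentially the same route as the paper: your greedy loop produces exactly the paper's ``good family'' $\cA$ with $|\cup\cA|\le|\cA|n_t$ (Claim \ref{bound}), Adversary answers no on it and yes elsewhere, and your mixed-case estimate $a(k_t+1)n_t-(k_t-b)n_t\ge in_t$ is the paper's Case 2/b computation with $e=a$, $i-e=b$. The only cosmetic point is that the bound $|\cup\cF_t^N|<|\cF_t^N|n_t$ should be non-strict when $\cF_t^N=\emptyset$, which the paper also notes and which changes nothing since all target inequalities are non-strict.
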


\begin{proof} We use induction on $t$ and let us consider round $t$ (with $G_0=\emptyset$).

\vspace{2mm}

We say that a family $\cA\subset \cF_t$ is \textit{good} if there are no $i \le d$ and $A_1,...,A_i \in \cF_t \setminus \cA$ with $$|\cup_{j=1}^{i} A_j \setminus \cup \cA |< i n_t.$$


To prove Lemma \ref{daltalanos} first we need the following claim:

\begin{clm}\label{bound} 
There is a good family $\cA$ with $$|\cup \cA| \le |\cA|n_t.$$

\end{clm}

\begin{proof}

One can build a good family the following greedy way. Starting with the empty family, at the first step we pick  $A^1_1,...,A^1_{i_1} \in \cF_t $ with $|\cup_{j=1}^{i_1} A^1_j |<i_1 n_t$ for some $i_1 \le d$ if such sets exist. Then let $\cA_1:=\{A^1_1, \dots, A^1_{i_1}\}$. For $s \ge 2$ in the $s^{th}$ step we pick $A^s_1,...,A^s_{i_s} \in \cF_t \setminus \cA_{s-1}$ with $|\cup_{j=1}^{i_s} A^s_j \setminus \cup \cA_{s-1} |< i_s n_t$ for some $i_s \le d$. If there is such a set, let $\cA_s:=\cA_{s-1}\cup \{A^s_1, \dots, A^s_{i_s}\}$. If there are no such sets,  let $\cA := \cA_{s-1}$, that is obviously a good family. As $\cF_t$ is a finite set, in finitely many steps we arrive to the later case.

We will show that for every $s\ge 0$ we have $|\cup \cA_s|\le |\cA_s|n_t$. We prove it by induction on $s$. It obviously holds for $s=0$. Let us assume it holds for $s-1$, and consider the last step. We have $|\cup \cA_{s-1}|\le |\cA_{s-1}|n_t$ and $\cA_s=\cA_{s-1}\cup \{A^s_1, \dots, A^s_{i_s}\}$, where $|\cup_{j=1}^{i_s} A^s_j \setminus \cup \cA_{s-1} |< i_s n_t$. These imply the statement.

\end{proof}

\noindent We remark that the above claim holds with strict inequality except if the only good family is the empty family.

\vspace{3mm}

In the $t^{th}$ round Adversary answers no to all queries in $\cA$, where $\cA$ is a good family satisfying Claim \ref{bound} (i.e. $\cF^N_t:=\cA$) and yes for all queries in $\cF_t \setminus \cA$. Note that in case $d=1$ it is a more general and compact form of writing down what Adversary does in (the proof of) Lemma \ref{altalanos},

\vspace{2mm}

Now we continue the proof of Lemma \ref{daltalanos}. We have 2 main cases:

\vspace{3mm}

$\textbf{Case 1:}$ Adversary gave no answers for all the queries in the first $t-1$ rounds.

\vspace{1mm}

\hspace{5mm} $\textbf{Case 1/a:}$ If $\cF^N_t=\cF_t$. 

 By induction we know that $|G_{t-1}| \le n-n_{t-1}$ and by Claim \ref{bound} we have $|\cup \cF_t| \le |\cF_t|n_t$, which proves $|G_t| \le n-n_t$ using that $n_{t-1} \ge (k_t+1)n_t$.

\vspace{2mm}

\hspace{5mm} $\textbf{Case 1/b:}$ If $\cF^N_t \neq \cF_t$. 

By the goodness of $\cF^N_t$ we are done.

\vspace{3mm}

$\textbf{Case 2:}$ Adversary gave at least one yes answer during the first $t-1$ rounds. We want to prove that for any $i\le d$ members $A_1,\dots, A_i \in \cG_t$ we have $|\cup_{j=1}^{i} A_j|\ge i n_t$. Note that by definition we have $n_{t-1}\ge (k_t+1)n_t$.

\vspace{1mm}

\hspace{5mm} $\textbf{Case 2/a:}$ $A_1,\dots, A_i \in \cF_t \setminus \cF^N_t.$

By the goodness of $\cF^N_t$ we have that $|\cup_{j=1}^i A_j|\ge in_t$.

\vspace{2mm}

\hspace{5mm} $\textbf{Case 2/b:}$ If there is $1 \le e \le i$ with $A_1,...,A_e \in \cG_{t-1}$ and $A_{e+1},...,A_i \in \cF_t \setminus \cF^N_t$.

In this case we know by the induction on $t$ that $$|A_1\cup...\cup A_e| \ge en_{t-1}\ge e(k_t+1)n_t,$$
and by Claim \ref{bound} that $$ |\cup \cF^N_t| \le (k_t-(i-e))n_t,$$
as $|\cF_t \setminus \cF^N_t| \ge i-e$. So we have $$|\cup_{j=1}^{i} A_j| \ge |A_1 \cup ... \cup A_e \setminus \cup \cF^N_t| \ge e(k_t+1)n_t-(k_t-(i-e))n_t \ge in_t,$$

and we are done with the proof of Lemma \ref{daltalanos}.

\end{proof}

Now we deal with the penultimate round. Let $n'_{r-1}:=\lfloor n_{r-2}/(k_{r-1}+d)\rfloor$.

\begin{lem}\label{daltalanos2} Adversary can answer $ \cF_{r-1}$ such a way that either 

\vspace{2mm}

$\bullet$ all the answers are no in the first $r-1$ rounds and $|G_{r-1}|\le n-dn'_{r-1}$, or 

\vspace{1mm}

$\bullet$ for $1 \le i \le d$ we have $m_{r-1}(i)\ge in'_{r-1}$. 

\end{lem}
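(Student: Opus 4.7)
The plan is to mirror the proof of Lemma~\ref{daltalanos} for the single step $t=r-1$, but with the threshold $n_{r-1}$ replaced throughout by $n'_{r-1}$ (equivalently, the denominator $(k_{r-1}+1)$ by $(k_{r-1}+d)$). First I would redefine a subfamily $\cA\subset\cF_{r-1}$ to be \emph{good} if no collection of at most $d$ sets $A_1,\dots,A_i\in\cF_{r-1}\setminus\cA$ satisfies $|\bigcup_{j=1}^{i} A_j\setminus\bigcup\cA|<in'_{r-1}$. The greedy construction of Claim~\ref{bound} carries over verbatim and produces a good family $\cA$ with $|\bigcup\cA|\le|\cA|\,n'_{r-1}$. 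Adversary sets $\cF^N_{r-1}:=\cA$ and answers no on $\cA$ and yes on $\cF_{r-1}\setminus\cA$.

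Next I would run the same case split as in Lemma~\ref{daltalanos}. In Case 1 (no yes-answers in the first $r-2$ rounds), if $\cF^N_{r-1}=\cF_{r-1}$, then combining the induction hypothesis $|G_{r-2}|\le n-n_{r-2}$ with $|\bigcup\cF_{r-1}|\le k_{r-1}n'_{r-1}$ yields $|G_{r-1}|\le n-n_{r-2}+k_{r-1}n'_{r-1}\le n-dn'_{r-1}$, using $n_{r-2}\ge(k_{r-1}+d)n'_{r-1}$. Otherwise, goodness gives $m_{r-1}(i)\ge in'_{r-1}$ for all $1\le i\le d$ directly.

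In Case 2 (a yes-answer occurred before round $r-1$), take $A_1,\dots,A_i\in\cG_{r-1}$ with $A_1,\dots,A_e\in\cG_{r-2}$ and $A_{e+1},\dots,A_i\in\cF_{r-1}\setminus\cF^N_{r-1}$. The subcase $e=0$ is immediate from goodness. For $e\ge1$, induction gives $|A_1\cup\cdots\cup A_e|\ge en_{r-2}\ge e(k_{r-1}+d)n'_{r-1}$, and the analog of Claim~\ref{bound} together with $|\cF^N_{r-1}|\le k_{r-1}-(i-e)$ gives $|\bigcup\cF^N_{r-1}|\le(k_{r-1}-(i-e))n'_{r-1}$; hence
$$\Big|\bigcup_{j=1}^{i}A_j\Big|\ge e(k_{r-1}+d)n'_{r-1}-(k_{r-1}-(i-e))n'_{r-1}=\big((e-1)k_{r-1}+e(d-1)+i\big)n'_{r-1}\ge in'_{r-1}.$$

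The only genuinely new ingredient is the arithmetic in this mixed subcase: inflating the denominator from $(k_{r-1}+1)$ to $(k_{r-1}+d)$ injects exactly the extra $(d-1)e$ units of $n'_{r-1}$ needed to absorb the deletion of up to $(k_{r-1}-(i-e))n'_{r-1}$ elements by $\bigcup\cF^N_{r-1}$ while still leaving at least $in'_{r-1}$ fresh elements in the union, for every $i\le d$. Once this bookkeeping is verified, the statement follows by the same structural argument as in Lemma~\ref{daltalanos}.
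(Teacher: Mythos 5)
Your proposal is correct and takes essentially the same approach as the paper, which only sketches this lemma by referring back to the proof of Lemma~\ref{daltalanos} with the threshold $n_{r-1}$ replaced by $n'_{r-1}$. Your explicit verification of the mixed subcase, giving $\bigl((e-1)k_{r-1}+e(d-1)+i\bigr)n'_{r-1}\ge in'_{r-1}$, is exactly the bookkeeping the paper leaves implicit and it checks out.
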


We state this lemma separately in order to emphasize that Adversary modifies the strategy for the penultimate round. However, the proof is essentially the same, thus we only give a sketch here.

\begin{proof}[Sketch of the proof of Lemma \ref{daltalanos2}] Here we call a family good if there are no $i \le d$ and $A_1,...,A_i \in \cF_t \setminus \cA$ with $|(\cup_{j=1}^{i} A_j) \setminus \cup \cA |< i n'_{r-1}.$ Similarly to Claim \ref{bound}, there is a good family $\cA$ with $|\cup \cA|\le |\cA|n'_{r-1}$. Adversary answers no to queries in $\cA$ and yes to other queries. By the goodness of $\cA$ we have $m_{r-1}(i)\ge in'_{r-1}$. On the other hand if $\cA=\cF_{r-1}$, then $|\cup \cA|\le |\cA|n'_{r-1}=|\cF_{r-1}|n'_{r-1}$, and we have $|G_{r-1}|=|G_{r-2}|+|\cup \cA|$. These together imply  $|G_{r-1}|\le n-dn'_{r-1}$.

\end{proof}

After $r-1$ rounds, there are two very different possibilities. Either the smallest set that intersects every member of $\cG_{r-1}$ has cardinality at least $d$, in which case there are at least $d$ excellent elements in $[n]$, or it has cardinality less than $d$, in which case it is possible that there are less than $d$ excellent elements in $[n]$.

\begin{lem}\label{dutso} 

Let $A$ be the smallest set that intersects every member of $\cG_{r-1}$. In the $r^{th}$ round Questioner needs to ask 

\vspace{2mm}

$\bullet$ at least $m_{r-1}(d)-d$ queries if $|A|\ge d$, and

\vspace{1mm}

$\bullet$ at least $n- |G_{r-1}|-d+1$ queries if $|A|<d$.

\end{lem}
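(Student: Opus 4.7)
We follow the structure of the proof of Lemma \ref{utso}, adapting it to $d\ge 2$. Without loss of generality, Questioner's strategy asks no singleton $\{x\}$ for which $x$ is already forced by $\cG_{r-1}$ (i.e., for which some $F\in\cG_{r-1}$ has $F\cap([n]\setminus G_{r-1})=\{x\}$), since such ``wasted'' queries give Questioner no new information and can be removed from any winning strategy.

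For the case $|A|\ge d$, we argue by induction on $n-|G_{r-1}|+m_{r-1}(d)$. The base cases where $m_{r-1}(d)\le d$ or $n-|G_{r-1}|\le d$ give non-positive bounds and are immediate. For the inductive step we establish the analogue of Claim \ref{redik}: if all round-$r$ queries have size at least two and Adversary answers ``yes'' to every query, then no new forced elements appear and the number of pre-existing forced elements is at most $d-1$ (otherwise we would have $m_{r-1}(d)\le d$). Since $|A|\ge d$ implies $|[n]\setminus G_{r-1}|\ge d$, Questioner can neither produce $d$ forced elements nor declare ``at most $d-1$ excellents''; any claimed $d$-tuple $\{x_1,\dots,x_d\}$ is contradicted by Adversary's assignment $E=[n]\setminus G_{r-1}\setminus\{x_{j^*}\}$ for a non-pre-forced $x_{j^*}$, exactly as in Claim \ref{redik}. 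Therefore Questioner must ask at least one singleton $\{x\}$, which by the WLOG assumption is not pre-forced. Adversary then picks whichever of yes or no maximizes the remaining work; the ``no'' option is always available and yields a new state with $n-|G|$ reduced by $1$ and $m_{r-1}(d)$ reduced by at most $1$, so by induction the remaining queries number at least $m_{r-1}(d)-d-1$, giving $k_r\ge m_{r-1}(d)-d$.

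For the case $|A|<d$, we induct on $n-|G_{r-1}|-p$, where $p$ denotes the number of distinct forced elements of $\cG_{r-1}$ (which satisfies $p\le|A|<d$). This measure strictly decreases under either of Adversary's responses to a non-pre-forced singleton query $\{x\}$: a ``no'' decreases $n-|G|$ by $1$, while a ``yes'' introduces $\{x\}$ as a new singleton in the yes-hypergraph, increasing $p$ by $1$. The base case $n-|G_{r-1}|-p\le 0$ forces $|[n]\setminus G_{r-1}|\le p<d$, so Questioner wins immediately by declaring ``at most $d-1$ excellents''. The inductive step again uses the analogue of Claim \ref{redik} to force a singleton query, and a case analysis on Adversary's response, combined with the identity $m(d)\le n-|G|$ that bounds the Case 1 $m$-values whenever the response triggers a transition from Case 2 to Case 1, yields $k_r\ge n-|G_{r-1}|-d+1$.

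The most delicate point is that Adversary's optimal response is not always the ``no'' that stays in the same case; in Case 2 an Adversary ``yes'' to a non-pre-forced singleton keeps $n-|G|$ unchanged, and one may transition between Case 1 and Case 2. These subtleties are handled by the refined inductive measure $n-|G|-p$ in Case 2 and by the inequality $m(d)\le n-|G|$, which together guarantee that the lemma's bound on the new state dominates the reduction required from the old state's bound.
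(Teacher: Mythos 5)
Your treatment of the case $|A|\ge d$ is essentially the paper's argument (same measure $n-|G_{r-1}|+m_{r-1}(d)$, same Claim forcing a singleton, Adversary answers no, $m_{r-1}(d)$ drops by at most one), and it is fine, modulo noting that after a ``no'' the minimum hitting set of the updated family can only grow, so the induction stays in the first bullet. The genuine gap is in the case $|A|<d$. The crucial point there is that Adversary must never let the state jump from the second bullet to the first: if after processing a singleton $\{x\}$ the new family $\cG'_{r-1}$ has minimum hitting set of size $\ge d$, the induction hypothesis only yields $m(d,\cG'_{r-1})-d$ further queries, and since $m(d,\cG'_{r-1})\le n-|G'_{r-1}|$ this can be far smaller than the required $n-|G_{r-1}|-d$; your inequality $m(d)\le n-|G|$ points in exactly the wrong direction here (it upper-bounds what the Case-1 induction gives, it cannot lower-bound it), so it cannot ``repair'' a Case-2-to-Case-1 transition. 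Concretely, with $d=2$, $\cG_{r-1}=\{\{a,y_1\},\{a,y_2\}\}$ and $[n]\setminus G_{r-1}=\{a,y_1,\dots,y_{100}\}$, the required bound is $100$; but if your Adversary processes the (non-forced) singleton $\{a\}$ and answers no, the new family is $\{\{y_1\},\{y_2\}\}$ with hitting number $2$, and your induction then certifies only $1+(m'(2)-2)=1$ query. So the argument collapses unless Adversary's choice of which singleton to process, and of yes versus no, is pinned down by a rule that preserves a hitting set of size $<d$ — and your sketch never supplies such a rule (``picks whichever of yes or no maximizes the remaining work'' and ``a case analysis on Adversary's response'' leave precisely this open).

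This is exactly what the paper's extra ingredient for $d\ge 2$ provides: Claim \ref{dredik} is strengthened to say that when $|A|<d$ Questioner must ask a one-element set \emph{disjoint from $A$}; Adversary answers no to that particular singleton, and since every member of $\cG_{r-1}$ meets $A$ and $x\notin A$, the set $A$ still hits every member of $\cG'_{r-1}$, so the new state stays in the second bullet and the count $1+(n-|G_{r-1}\cup\{x\}|-d+1)$ closes the induction. (An alternative rule in your spirit would be: answer yes to $\{x\}$ if $x$ lies in some minimum hitting set and no otherwise, which also keeps the hitting number below $d$; but you would have to state and verify this.) You would also need to spell out, in the $|A|<d$ version of the Claim, why Questioner cannot simply declare ``fewer than $d$ excellent'' (all of $[n]\setminus G_{r-1}$ could be excellent, and one may assume $n-|G_{r-1}|\ge d$ since otherwise the bound is non-positive); your sketch only addresses this in the base case. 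As written, the second half of the lemma is not proved.
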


\begin{rem} We also remark here that there is no indication about Adversary's strategy during the first $r-1$ rounds in Lemma \ref{dutso}. So the statement of the lemma is true for any strategy.

\end{rem}

\begin{proof} We use induction on $n-|G_{r-1}|+m_{r-1}(d).$ 

\vspace{2mm}


\vspace{2mm}

If $|A|\ge d$ then we can suppose that $m_{r-1}(d) \ge d+1$ and $n-|G_{r-1}|\ge d$ (otherwise Lemma \ref{dutso} obviously holds). Assuming these we state the following:

\begin{clm}\label{dredik}

Questioner has to ask a one-element set in the $r^{th}$ round. Moreover, if $|A|<d$, Questioner has to ask a one-element set disjoint from $A$.

\end{clm}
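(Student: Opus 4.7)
My plan is to mirror Claim \ref{redik}: prove both assertions by contradiction, having Adversary answer ``yes'' to every round-$r$ query, and for each possible Questioner output exhibit a single consistent configuration that defeats it.

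For the first assertion I would suppose no round-$r$ query is a singleton. Setting $Y := \{y\in [n]\setminus G_{r-1} : \{y\}\in \cG_{r-1}\}$ (the elements already forced to be excellent by previous rounds), I first observe that any $d$ distinct singletons of $\cG_{r-1}$ have union of size $d$, so the standing hypothesis $m_{r-1}(d)\ge d+1$ forces $|Y|\le d-1$. Given any claim $\{x_1,\dots,x_d\}$ of excellent elements, I pick $j$ with $x_j\notin Y$ and declare $E := ([n]\setminus G_{r-1})\setminus\{x_j\}$ to be the excellent set. Every $B\in\cG_{r-1}$ is either of size at least two or a singleton $\{y\}$ with $y\ne x_j$, so $B\cap E\ne\emptyset$; every $F\in\cF_r$ has $|F|\ge 2$, so $F\cap E\ne\emptyset$ as well. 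Thus $E$ is consistent with all answers yet does not contain $x_j$, contradicting the claim.

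For the second assertion I would suppose no singleton disjoint from $A$ is asked, so every round-$r$ singleton has the form $\{a\}$ with $a\in A$. The extra observation I need is $Y\subseteq A$: every forced excellent element must belong to every hitting set of $\cG_{r-1}$. Adversary again answers yes to every $F\in\cF_r$. Against the output $\{x_1,\dots,x_d\}$: since $|A|<d$, some $x_j\notin A$, whence $x_j\notin Y$ and $\{x_j\}\notin\cF_r$, so exactly the same $E$ as in the first assertion works. Against the output ``there are at most $d-1$ excellent elements'': pick any minimum hitting set $A'$ of $\cG_{r-1}\cup\cF_r$ lying in $[n]\setminus G_{r-1}$ and, using the hypothesis $n-|G_{r-1}|\ge d$, pad $A'$ with arbitrary elements of $[n]\setminus G_{r-1}\setminus A'$ up to size exactly $d$; the padded set hits every yes-set, avoids $G_{r-1}$, and has at least $d$ elements, contradicting the output.

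The one mildly delicate step is the inclusion $Y\subseteq A$: it is immediate, but plays a crucial role by converting the hypothesis $|A|<d$ into the bound $|Y|\le d-1$ needed to locate an unforced $x_j$. Apart from this observation the proof is a direct generalization of the $d=1$ case treated in Claim \ref{redik}.
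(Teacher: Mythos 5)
Your proposal is correct and takes essentially the same route as the paper: Adversary answers yes to everything in round $r$, and for a pointed element $x$ whose singleton is not forced (your $x_j\notin Y$, resp.\ $x_j\notin A$ using $Y\subseteq A$) the configuration in which exactly $([n]\setminus G_{r-1})\setminus\{x\}$ is excellent is consistent with all answers, which is precisely the paper's argument via $m_{r-1}(d)\ge d+1$ and ``the elements of $A$ are excellent''. The only cosmetic differences are that for the output ``fewer than $d$ excellent elements'' the paper just takes all of $[n]\setminus G_{r-1}$ to be excellent (your padded hitting set is an unnecessary detour), and that this output needs no discussion in the $|A|\ge d$ case since, as noted just before Lemma \ref{dutso}, $|A|\ge d$ already guarantees at least $d$ excellent elements.
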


\begin{proof}[Proof of Claim \ref{dredik}]

We prove by contradiction, so suppose otherwise, Questioner does not ask any one element set. Let us assume first $|A|\ge d$. Then Adversary answers 'yes' to all queries. Suppose that Questioner could point $d$ excellent elements. Then, by $m_{r-1}(d) \ge d+1$ there is a pointed element $x \in [n]$ such that all members of $\cG_{r-1}$ that contain $x$ have cardinality at least two. So it is possible that $x$ is not excellent and all the elements in $[n] \setminus (G_{r-1} \cup \{x\})$ are excellent, as this is compatible with the answers given during the first $r-1$ rounds and even in the $r^{th}$ round, and in this case Questioner should not point $x$, which is a contradiction.

Assume now that $|A|<d$. Adversary again answers 'yes' to all queries. Suppose that Questioner could point $d$ excellent elements. Then there is a pointed element $x\not\in A$. If $\{x\}$ is not asked then it is possible that $x$ is not excellent and all the elements in $[n] \setminus (G_{r-1} \cup \{x\})$ are excellent, since the elements of $A$ are excellent, thus the answer given during the first $r-1$ rounds are compatible, while the answers in the last round are compatible (unless $\{x\}$ was asked). Finally assume that Questioner claims that there are less than $d$ excellent elements. Obviously it is possible that all the elements of $[n] \setminus G_{r-1}$ are excellent, and there are at least $d$ of them, a contradiction.

\end{proof}

Let us continue the proof of Lemma \ref{dutso} and assume first that $|A|\ge d$. By Claim \ref{dredik} Questioner asks a one-element set $\{x\}$ in the $r^{th}$ round. We can suppose that all members of $\cG_{r-1}$ that contain $x$ have cardinality at least two, since otherwise Questioner should not ask $\{x\}$ in the $r^{th}$ round. Adversary will answer no to $\{x\}$, and puts it into $G_{r-1}$ and delete from the queries in $\cG_{r-1}$ and from the queries that were asked in the $r^{th}$ round and contained $x$. Let us denote by $\cG_{r-1}'$ and $\cF'_r$ these families. This operation is compatible with all the previous answers (since it can be the case that all the elements in $[n] \setminus (G_{r-1} \cup \{x\})$ are excellent), and the queries asked by Questioner should solve this problem (meaning that Questioner can point $d$ excellent or can state that there are less than $d$).  Then for all $1 \le i \le d$ we have $$m_{r-1}(i) \ge m(i,\cG'_{r-1}) \ge m_{r-1}(i)-1.$$

However in this situation we have that $n-|G_{r-1} \cup \{x\}|+m(d,\cG'_{r-1}) < n-|G_{r-1}|+m_{r-1}(d)$, so by induction - and by $m(d,\cG'_{r-1}) \ge d$ Questioner should ask at least $m(d,\cG'_{r-1})-d$ questions. As $m(d,\cG'_{r-1}) \ge m_{r-1}(d)-1$ we are done.

Let us assume now $|A|<d$. By Claim \ref{dredik} Questioner asks a one-element set $\{x\}$ that is disjoint from $A$ in the $r^{th}$ round. In this case we know that all members of $\cG_{r-1}$ that contain $x$ have cardinality at least two, as they also intersect $A$. Then like in the previous case Adversary answers no to $\{x\}$ and considers it as if it was asked in an earlier round. $\cG_{r-1}'$ and $\cF'_r$ are defined similarly. $A$ intersects every member of $\cG_{r-1}'$, thus at least $n-|G_{r-1}\cup \{x\}|-d+1$ further questions are needed, thus we are done with the proof of Lemma \ref{dutso}.

\end{proof}

By Lemma \ref{daltalanos} and Lemma \ref{dutso} we have that 
$$|P(n,?,d,r)| \ge k_1+\dots+k_{r-1}+dn'_{r-1}-d$$ $$\ge k_1+\dots+k_{r-1}+\frac{nd}{(k_1+1)\dots(k_{r-2}+1)(k_{r-1}+d)}-d-rd$$
and using again some reorganization and the inequality of the geometric and arithmetic means we are done with the proof of Theorem \ref{main3}.

\qed

\section*{Questions, Remarks}

To finish this article we pose a couple questions:

\vspace{2mm}

$\bullet$ The first one is about the statement of Theorem \ref{main3}. It would be interesting to find the same multiplicative factor of $n^{1/r}$ in a lower and an upper bound thus determine the asymptotic of $|P(n,?,d,r)|$.

\vspace{2mm}

Note that in the case $r=\lceil \log n \rceil$ (so basically in the adaptive case) Theorem \ref{main1} does not give back the adaptive result of Katona(\cite{K2011}).

\vspace{1mm}

$\bullet$ It would be interesting to determine the asymptotics of $|P(n,?,d,r)|$, when $r$ or $d$ is a function of $n$ that goes to infinity with $n$.

\vspace{2mm}

$\bullet$ In this paper we assumed nothing in advance about the number of excellent elements. One could consider different models where we know that there are exactly, at most, or at least $e$ excellent elements in $[n]$.

\section*{Acknowledgement}

We would like to thank all participants of the Combinatorial Search Seminar at the Alfr\'ed R\'enyi Institute of Mathematics for fruitful discussions.


\begin{thebibliography}{99}

\bibitem{AW1987} R. Ahlswede and I. Wegener, Search problems, John Wiley (1987).

\bibitem{A1988} M. Aigner, Combinatorial search, Teubner-Wiley (1988).

\bibitem{C2013} F. Cicalese, Fault-Tolerant Search Algorithms, Monographs in Theoretical Computer Science–-\textit{An EATCS} Series. Vol. 15. Springer-Verlag, (2013).

\bibitem{DH1999} D.-Z. Du and F. K. Hwang, Combinatorial group testing and its applications, Vol. 12. World Scientific, (1999).

\bibitem{K1973} G.O.H. Katona, Combinatorial search problems, A survey of combinatorial theory, 285--308. (1973)

\bibitem{K2011} G.O.H. Katona, Finding at least one excellent element in two rounds, Journal of Statistical Planning and Inference, 141:2946--2952. (2011)

\bibitem{W2013} G. Wiener, Rounds in combinatorial search, Algorithmica, 67(3), 315-323. (2013)

\end{thebibliography}
\end{document}